\newtheorem{theorem}{Theorem}
\theoremstyle{plain}
\newtheorem{acknowledgement}{Acknowledgement}
\newtheorem{corollary}{Corollary}
\newtheorem{definition}{Definition}
\numberwithin{equation}{section}
\begin{document}
\author{}
\title{}
\maketitle

\begin{center}
\thispagestyle{empty} \pagestyle{myheadings} 
\markboth{\bf Yilmaz Simsek
}{\bf Construction a new generating function of Bernstein type polynomials}

\textbf{{\Large Construction a \textbf{new }generating function of Bernstein 
\textbf{\textbf{type}} polynomials}}

\bigskip

\textbf{Yilmaz Simsek}\\[0pt]

\medskip

Department of Mathematics, Faculty of Art and Science\\[0pt]

University of Akdeniz\\[0pt]

TR-07058 Antalya, Turkey \\[0pt]

E-mail\textbf{: ysimsek@akdeniz.edu.tr}\\[0pt]

\bigskip \textit{Dedicated to Professor H. M. Srivastava on the occasion of
his seventieth birth anniversary}

\textbf{{\large {Abstract}}}\medskip
\end{center}

\begin{quotation}
Main purpose of this paper is to reconstruct generating function of the
Bernstein type polynomials. Some properties this generating functions are
given. By applying this generating function, not only derivative of these
polynomials but also recurrence relations of these polynomials are found.
Interpolation function of these polynomials is also constructed via Mellin
Transformation. This function interpolates these polynomials at negative
integers which are given explicitly. Moreover, relations between these
polynomials, the generalized Stirling numbers, and Bernoulli polynomials of
higher order are given. Furthermore some applications associated with B%
\'{}%
ezier curve are given.
\end{quotation}

\bigskip

\noindent \textbf{2010 Mathematics Subject Classification.} Primary 11B68,
11M06, 33B15 ; Secondary 33B15, 65D17.

\bigskip

\noindent \textbf{Key Words and Phrases.} Generating function, Bernstein
polynomials, Bernoulli polynomials of higher-order, Stirling numbers of
second kind, interpolation function, Mellin transformation, Gamma function,
beta function and B%
\'{}%
ezier curve.

\section{Introduction, Definitions and Preliminaries}

The Bernstein polynomials, recently, have been defined by many different
ways, for examples in $q$-series, by complex function and many algorithms.
These polynomials are used not only approximations of functions in various,
but also in the other fields such as smoothing in statistics, numerical
analysis, the solution of the differential equations, and constructing B%
\'{}%
ezier curve and in Computer Aided Design cf. (\cite{bernstein}, \cite%
{phillips-2}, \cite{Goldman}, \cite{goldman2}, \cite{SOstrovskaAM}, \cite%
{Simsek Acikgoz}, \cite{AcikgozSerkan}), and see also the references cited
in each of these earlier works.

By the same motivation of Ozden' \cite{Ozden} paper, which is related to the
unification of the Bernoulli, Euler and Genocchi polynomials, we, in this
paper, construct a generating function of the Bernstein polynomials which
unify generating function in \cite{Simsek Acikgoz}, \cite{AcikgozSerkan}.

\section{Construction generating functions of Bernstein type polynomials}

In this section we unify generating function of the Bernstein polynomials.
We define%
\begin{equation*}
\mathcal{F}(t,b,s:x)=\frac{2^{b}x^{bs}\left( \frac{t}{2}\right)
^{bs}e^{t(1-x)}}{\left( bs\right) !}
\end{equation*}%
where $b,s\in \mathbb{Z}^{+}:=\{1,2,3,\cdots \}$, $t\in \mathbb{C}$ and $%
x\in \left[ 0,1\right] $. This function is generating function of the
polynomials $\mathfrak{S}_{n}(bs,x)$:%
\begin{equation}
\mathcal{F}(t,b,s:x)=\sum_{n=0}^{\infty }\mathfrak{S}_{n}(bs,x)\frac{t^{n}}{%
n!},  \label{s1}
\end{equation}%
where $\mathfrak{S}_{0}(bs,x)=\cdots =\mathfrak{S}_{bs-1}(bs,x)=0$.

\textbf{Remark 1.} If we set $s=1$ in (\ref{s1}), we obtain%
\begin{equation*}
\frac{\left( xt\right) ^{b}e^{t(1-x)}}{b!}=\sum_{n=0}^{\infty }B_{n}(b,x)%
\frac{t^{n}}{n!},
\end{equation*}%
and $\mathfrak{S}_{n}(b,x)=B_{n}(b,x)$, which denotes the Bernstein
polynomials cf. (\cite{bernstein}, \cite{Goldman}, \cite{goldman2}, \cite%
{phillips-2}, \cite{Simsek Acikgoz}, \cite{AcikgozSerkan}).

By using Taylor expansion of $e^{t}$ in (\ref{s1}), we arrive at the
following theorem:

\begin{theorem}
\label{Teorem-1}Let $x,y\in \lbrack 0,1]$. Let $b,$ $n$ and $s\ $be
nonnegative integers. If $n\geq bs$, then we have%
\begin{equation*}
\mathfrak{S}_{n}(bs,x)=\left( 
\begin{array}{c}
n \\ 
bs%
\end{array}%
\right) \frac{x^{bs}(1-x)^{n-bs}}{2^{b(s-1)}}.
\end{equation*}
\end{theorem}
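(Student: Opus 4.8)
The plan is to prove the identity by direct expansion of the defining generating function \eqref{s1} and comparison of coefficients of $t^n$. First I would rewrite the closed form of $\mathcal{F}(t,b,s:x)$ by pulling the constants out of the factor $\left(\tfrac{t}{2}\right)^{bs}$, so that
\begin{equation*}
\mathcal{F}(t,b,s:x)=\frac{2^{b}x^{bs}}{2^{bs}(bs)!}\,t^{bs}e^{t(1-x)}
=\frac{x^{bs}}{2^{b(s-1)}(bs)!}\,t^{bs}e^{t(1-x)},
\end{equation*}
using $2^{b}/2^{bs}=2^{b(1-s)}=1/2^{b(s-1)}$. This isolates the only two $t$-dependent pieces: the monomial $t^{bs}$ and the exponential $e^{t(1-x)}$.

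Next I would insert the Taylor expansion $e^{t(1-x)}=\sum_{m=0}^{\infty}\frac{(1-x)^{m}}{m!}t^{m}$ (the step already flagged in the paragraph preceding the theorem) and multiply through by $t^{bs}$, obtaining
\begin{equation*}
\mathcal{F}(t,b,s:x)=\sum_{m=0}^{\infty}\frac{x^{bs}(1-x)^{m}}{2^{b(s-1)}(bs)!\,m!}\,t^{m+bs}.
\end{equation*}
Then I would reindex with $n=m+bs$, so that the sum runs over $n\geq bs$ and $m=n-bs$, giving a power series in $t$ whose $n$-th coefficient is $\dfrac{x^{bs}(1-x)^{n-bs}}{2^{b(s-1)}(bs)!\,(n-bs)!}$ for $n\geq bs$ and $0$ for $n<bs$.

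Finally I would match this against the right-hand side of \eqref{s1}, i.e. equate the coefficient of $t^{n}$ here with $\mathfrak{S}_{n}(bs,x)/n!$. For $n<bs$ this recovers $\mathfrak{S}_{0}(bs,x)=\cdots=\mathfrak{S}_{bs-1}(bs,x)=0$, and for $n\geq bs$ it yields
\begin{equation*}
\mathfrak{S}_{n}(bs,x)=\frac{n!}{(bs)!\,(n-bs)!}\cdot\frac{x^{bs}(1-x)^{n-bs}}{2^{b(s-1)}}=\binom{n}{bs}\frac{x^{bs}(1-x)^{n-bs}}{2^{b(s-1)}},
\end{equation*}
which is the claim. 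There is no genuine obstacle here; the only points requiring care are the algebraic simplification of the powers of $2$ and the shift of summation index, together with the justification that the series may be compared termwise (valid since both sides are formal/analytic power series in $t$ with the same region of convergence, $e^{t(1-x)}$ being entire). The spurious variable $y$ in the hypothesis plays no role and can be ignored.
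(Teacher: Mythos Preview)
Your argument is correct and is exactly the approach the paper indicates: it states only that the theorem follows ``by using Taylor expansion of $e^{t}$ in (\ref{s1})'', and your expansion, reindexing, and coefficient comparison carry this out in full detail.
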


\textbf{Remark 1.} Setting $s=1$ in Theorem \ref{Teorem-1}, one can see that
the polynomials%
\begin{equation*}
\mathfrak{S}_{n}(b,x)=\left( 
\begin{array}{c}
n \\ 
b%
\end{array}%
\right) x^{b}(1-x)^{n-b},
\end{equation*}%
which give us the Bernstein polynomials cf. (\cite{Simsek Acikgoz}, \cite%
{AcikgozSerkan}). Consequently, the polynomials $\mathfrak{S}_{n}(bs,x)$ are
unification of the Bernstein polynomials.

By using Theorem \ref{Teorem-1}, we easily obtain the following results.

\begin{corollary}
Let $b$, $n$ and $s$ be nonnegative integers with $n\geq bs$. Then we have%
\begin{equation*}
\left( 
\begin{array}{c}
n \\ 
bs%
\end{array}%
\right) \mathfrak{S}_{n-bs}(bs;x)=\left( 
\begin{array}{c}
n+bs \\ 
n%
\end{array}%
\right) \mathfrak{S}_{n}(bs;x).
\end{equation*}
\end{corollary}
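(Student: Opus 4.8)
The plan is to reduce the asserted equality to a closed-form statement by feeding both polynomials through \thmref{Teorem-1}, which supplies each of them explicitly. Since $n\geq bs$, the right-hand factor is covered directly, namely
\[
\mathfrak{S}_{n}(bs,x)=\binom{n}{bs}\frac{x^{bs}(1-x)^{n-bs}}{2^{b(s-1)}}.
\]
For the left-hand factor I would first isolate the boundary range $bs\leq n<2bs$: there $n-bs<bs$, so the normalization $\mathfrak{S}_{0}(bs,x)=\cdots=\mathfrak{S}_{bs-1}(bs,x)=0$ recorded after (\ref{s1}) forces $\mathfrak{S}_{n-bs}(bs,x)=0$, and both sides are to be compared directly in this regime. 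For $n\geq 2bs$ the theorem applies a second time with $n$ replaced by $n-bs$, giving
\[
\mathfrak{S}_{n-bs}(bs,x)=\binom{n-bs}{bs}\frac{x^{bs}(1-x)^{n-2bs}}{2^{b(s-1)}}.
\]

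The next step is to substitute these two expressions into the corollary and cancel the common factor $\binom{n}{bs}\,x^{bs}2^{-b(s-1)}$ that occurs on both sides. After this cancellation the claim collapses to a purely elementary relation whose surviving data are the two binomial coefficients $\binom{n-bs}{bs}$ and $\binom{n+bs}{n}=\binom{n+bs}{bs}$ together with the two powers $(1-x)^{n-2bs}$ and $(1-x)^{n-bs}$; concretely one is reduced to comparing
\[
\binom{n-bs}{bs}(1-x)^{n-2bs}\qquad\text{with}\qquad\binom{n+bs}{n}(1-x)^{n-bs}.
\]
The factorial expansions $\binom{n+bs}{n}=\dfrac{(n+bs)!}{n!\,(bs)!}$ and $\binom{n-bs}{bs}=\dfrac{(n-bs)!}{(bs)!\,(n-2bs)!}$ are the natural instruments for lining up the coefficient part.

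The main obstacle is precisely this final reduction, and it is genuinely the delicate point rather than the mechanical substitution preceding it: the two sides carry different exponents of $(1-x)$ (namely $n-2bs$ versus $n-bs$) as well as two visibly different binomial factors, so the heart of the verification is reconciling these exponents and coefficients after the index shift $n\mapsto n-bs$ is applied to \thmref{Teorem-1}. I would therefore devote the bulk of the argument to pinning down exactly how that shift is meant to act on the left-hand side and to matching the resulting coefficient relation term by term; the clean cancellation of $\binom{n}{bs}\,x^{bs}2^{-b(s-1)}$ ensures that the stated equality rests entirely on correctly aligning the $(1-x)$-exponents together with $\binom{n-bs}{bs}$ and $\binom{n+bs}{n}$, and it is this alignment that the proof must establish with care.
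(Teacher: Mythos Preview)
Your reduction is correct and follows exactly the route the paper intends: the paper's entire proof is the sentence ``By using Theorem~\ref{Teorem-1}, we easily obtain the following results,'' so substituting the explicit formula from Theorem~\ref{Teorem-1} on both sides is the right move, and you have carried out that substitution accurately. The difficulty is that you stop at the decisive moment, saying only that the alignment of the $(1-x)$-exponents and the binomial coefficients ``must be established with care.'' In fact that alignment cannot be established at all: after your (correct) cancellation the claim reads
\[
\binom{n-bs}{bs}(1-x)^{n-2bs}\;=\;\binom{n+bs}{n}(1-x)^{n-bs},
\]
and for $bs\geq 1$ the two sides are polynomials in $x$ of different degrees, hence cannot coincide identically on $[0,1]$.

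Concretely, take $b=s=1$ (so $bs=1$) and $n=2$. Then $\mathfrak{S}_{1}(1,x)=x$ and $\mathfrak{S}_{2}(1,x)=2x(1-x)$, so the left side of the corollary is $\binom{2}{1}\mathfrak{S}_{1}(1,x)=2x$ while the right side is $\binom{3}{2}\mathfrak{S}_{2}(1,x)=6x(1-x)$; at $x=\tfrac12$ these are $1$ and $\tfrac32$. Thus the corollary as printed is false, and the ``delicate point'' you flagged is not a matter of care but an actual obstruction. Your write-up would be strengthened by saying this explicitly rather than deferring the verification: the mismatch in the exponent of $(1-x)$ that you already noticed is the counterexample, not a technicality to be smoothed over.
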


Setting%
\begin{equation*}
\mathfrak{g}_{n}(bs,x)=2^{b(s-1)}\mathfrak{S}_{n}(bs,x),
\end{equation*}%
where, for $bs=j$,%
\begin{equation*}
\sum_{j=0}^{n}\mathfrak{g}_{n}(j,x)=1.
\end{equation*}%
Let $f$ be a continuous function on $\left[ 0,1\right] $. Then we define
unification Bernstein type operator as follows:%
\begin{equation}
\mathbb{S}_{n}\left( f(x)\right) =\sum_{j=0}^{n}f\left( \frac{j}{n}\right) 
\mathfrak{g}_{n}(j;x),  \label{r6}
\end{equation}%
where $x\in \lbrack 0,1]$, $n$ is positive integer.

Setting $f(x)=x$ in (\ref{r6}), then we have%
\begin{equation*}
\mathbb{S}_{n}\left( x\right) =\sum_{j=0}^{n}\frac{j}{n}\left( 
\begin{array}{c}
n \\ 
j%
\end{array}%
\right) x^{j}(1-x)^{n-j}.
\end{equation*}%
From the above, we get%
\begin{equation*}
\mathbb{S}_{n}\left( x\right) =x\sum_{j=0}^{n}\mathfrak{g}_{n-1}(j-1,x).
\end{equation*}

\section{Fundamental relations of the polynomials $\mathfrak{S}_{n}(bs,x)$}

By using generating function of $\mathfrak{S}_{n}(bs,x)$, in this section we
give derivative of $\mathfrak{S}_{n}(bs,x)$ and recurrence relation of $%
\mathfrak{S}_{n}(bs,x)$.

\begin{theorem}
Let $x\in \lbrack 0,1]$. Let $b$, $n$ and $s$ be nonnegative integers with $%
n\geq bs$. Then we have%
\begin{equation}
\frac{d}{dx}\mathfrak{S}_{n}(bs,x)=n\left( \mathfrak{S}_{n-1}(bs-1,x)-%
\mathfrak{S}_{n}(bs,x)\right) .  \label{r2}
\end{equation}
\end{theorem}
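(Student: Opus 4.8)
The plan is to differentiate the generating function identity with respect to $x$ and match coefficients of $t^n/n!$ on both sides. Starting from
\[
\mathcal{F}(t,b,s:x)=\frac{2^{b}x^{bs}\left(\frac{t}{2}\right)^{bs}e^{t(1-x)}}{(bs)!}=\sum_{n=0}^{\infty}\mathfrak{S}_{n}(bs,x)\frac{t^{n}}{n!},
\]
I would apply $\frac{d}{dx}$ to the closed form. By the product rule this produces two terms: one from differentiating $x^{bs}$, which brings down a factor $bs/x$ and hence gives $\frac{2^{b}\,bs\,x^{bs-1}(t/2)^{bs}e^{t(1-x)}}{(bs)!}$, and one from differentiating $e^{t(1-x)}$, which brings down a factor $-t$ and gives $-t\,\mathcal{F}(t,b,s:x)$.

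The next step is to recognize the first term in terms of a generating function of the same family but with parameter $bs$ replaced by $bs-1$. Writing $bs-1=b's'$ is not literally possible in general, but the generating-function manipulation still works at the level of the formal series: one checks that
\[
\frac{2^{b}\,bs\,x^{bs-1}(t/2)^{bs}e^{t(1-x)}}{(bs)!}=\frac{t}{2}\cdot\frac{2^{b}x^{bs-1}(t/2)^{bs-1}e^{t(1-x)}}{(bs-1)!},
\]
and the series whose coefficients are $\mathfrak{S}_{m}(bs-1,x)$ is exactly $\frac{2^{b}x^{bs-1}(t/2)^{bs-1}e^{t(1-x)}}{(bs-1)!}$ (this is the same defining recipe with $bs$ lowered by one, valid since Theorem~\ref{Teorem-1} gives the explicit form for all the coefficients). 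Hence the first term equals $\frac{t}{2}\sum_{m=0}^{\infty}\mathfrak{S}_{m}(bs-1,x)\frac{t^{m}}{m!}$. Alternatively, and perhaps more cleanly, I would avoid series bookkeeping entirely and just verify the identity directly from the explicit formula in Theorem~\ref{Teorem-1}: differentiate $\binom{n}{bs}\frac{x^{bs}(1-x)^{n-bs}}{2^{b(s-1)}}$, use $\frac{d}{dx}\big(x^{bs}(1-x)^{n-bs}\big)=bs\,x^{bs-1}(1-x)^{n-bs}-(n-bs)x^{bs}(1-x)^{n-bs-1}$, and then reorganize the binomial coefficients via $\binom{n}{bs}bs=n\binom{n-1}{bs-1}$ and $\binom{n}{bs}(n-bs)=n\binom{n-1}{bs}$.

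Carrying that out, the derivative becomes
\[
n\left[\binom{n-1}{bs-1}\frac{x^{bs-1}(1-x)^{n-bs}}{2^{b(s-1)}}-\binom{n-1}{bs}\frac{x^{bs}(1-x)^{n-1-bs}}{2^{b(s-1)}}\right],
\]
and the first bracketed term is precisely $\mathfrak{S}_{n-1}(bs-1,x)$ by Theorem~\ref{Teorem-1} (the normalization $2^{b(s-1)}$ matches since lowering $bs$ by one does not change $b$ or $s$ in the relevant exponent as used here), while the second is $\mathfrak{S}_{n}(bs,x)$ up to the same normalization. Collecting gives $\frac{d}{dx}\mathfrak{S}_{n}(bs,x)=n\big(\mathfrak{S}_{n-1}(bs-1,x)-\mathfrak{S}_{n}(bs,x)\big)$, which is \eqref{r2}.

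The main obstacle is purely notational: the symbol $\mathfrak{S}_{n-1}(bs-1,x)$ must be interpreted consistently, since $bs-1$ need not factor as (new $b$)$\times$(new $s$) with the same $b$. I would address this by treating $\mathfrak{S}_{\bullet}(m,x)$ as indexed by the single integer $m=bs$ together with the ambient $b$ (so that $2^{b(s-1)}=2^{b}/2^{bs/s}$-type factors are read off from the explicit formula), or simply by phrasing the whole argument in terms of the explicit polynomials of Theorem~\ref{Teorem-1} so that the recurrence is an identity between concrete functions and no interpretation issue arises. The binomial identities $\binom{n}{bs}bs=n\binom{n-1}{bs-1}$ and $\binom{n}{bs}(n-bs)=n\binom{n-1}{bs}$ are routine and will not be belabored.
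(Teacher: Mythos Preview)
Your generating-function outline is exactly the paper's approach: differentiate $\mathcal{F}(t,b,s:x)$ with respect to $x$, recognize the two pieces as $t$ times shifted generating series, and compare coefficients. Your alternative route via the explicit formula of Theorem~\ref{Teorem-1} is a clean direct argument that the paper does not give.

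However, there is a slip at the very end of your direct computation. The second bracketed term
\[
\binom{n-1}{bs}\frac{x^{bs}(1-x)^{\,n-1-bs}}{2^{b(s-1)}}
\]
is, by Theorem~\ref{Teorem-1} applied with $n$ replaced by $n-1$, equal to $\mathfrak{S}_{n-1}(bs,x)$, \emph{not} $\mathfrak{S}_{n}(bs,x)$. So what your calculation actually proves is
\[
\frac{d}{dx}\mathfrak{S}_{n}(bs,x)=n\bigl(\mathfrak{S}_{n-1}(bs-1,x)-\mathfrak{S}_{n-1}(bs,x)\bigr),
\]
which is precisely what the paper's own proof displays in its final line; the subscript $n$ in the stated formula \eqref{r2} is a typo for $n-1$. (A quick check with $s=1$, comparing to the classical Bernstein derivative formula in Remark~3, confirms this.) You should not bend your identification to match the misstated theorem.

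A smaller point: in your generating-function sketch, the factor in front is $t$, not $t/2$. With the convention that $\mathfrak{S}_{m}(bs-1,x)$ carries the \emph{same} normalization $2^{b(s-1)}$, its generating series is $\dfrac{2^{b-1}x^{bs-1}(t/2)^{bs-1}e^{t(1-x)}}{(bs-1)!}$, not the expression with $2^{b}$ that you wrote; this is exactly why the paper gets a clean factor of $t$. Your awareness that the symbol $\mathfrak{S}_{n-1}(bs-1,x)$ needs a fixed interpretation is well placed---just carry that care through consistently.
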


\begin{proof}
By using the partial derivative of a function in (\ref{s1}) with respect to
the variable $x$, we have%
\begin{equation*}
\sum_{n=0}^{\infty }\frac{\partial }{\partial x}\left( \mathfrak{S}%
_{n}(bs,x)\right) \frac{t^{n}}{n!}=t\sum_{n=0}^{\infty }\mathfrak{S}%
_{n}(bs-1,x)\frac{t^{n}}{n!}-t\sum_{n=0}^{\infty }\mathfrak{S}_{n}(bs,x)%
\frac{t^{n}}{n!}.
\end{equation*}%
From the above, we obtain%
\begin{equation*}
\sum_{n=0}^{\infty }\left( \frac{d}{dx}\mathfrak{S}_{n}(bs,x)\right) \frac{%
t^{n}}{n!}=\sum_{n=0}^{\infty }n\mathfrak{S}_{n-1}(bs-1,x)\frac{t^{n}}{n!}%
-\sum_{n=0}^{\infty }n\mathfrak{S}_{n-1}(bs,x)\frac{t^{n}}{n!}.
\end{equation*}
\end{proof}

By using the partial derivative of a function in (\ref{s1}) with respect to
the variable $t$, we arrive at the following theorem:

\begin{theorem}
Let $x\in \lbrack 0,1]$. Let $b$, $n$ and $s$ be nonnegative integers with $%
n\geq bs$. Then we have%
\begin{equation}
\mathfrak{S}_{n}(bs,x)=x\mathfrak{S}_{n-1}(bs-1,x)+(1-x)\mathfrak{S}%
_{n-1}(bs,x).  \label{r3}
\end{equation}
\end{theorem}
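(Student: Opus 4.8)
The plan is to differentiate the generating function
$\mathcal{F}(t,b,s:x)$ with respect to $t$, exactly as the preceding
theorem differentiated with respect to $x$. Writing
$\mathcal{F}(t,b,s:x)=\dfrac{2^{b}x^{bs}(t/2)^{bs}e^{t(1-x)}}{(bs)!}$, the
$t$-derivative hits the two $t$-dependent factors, namely the power
$(t/2)^{bs}$ and the exponential $e^{t(1-x)}$, by the product rule. The
first piece gives $\dfrac{bs}{t}\,\mathcal{F}(t,b,s:x)$, which after
cancelling one power of $t$ is recognized (up to the factor $x$) as the
generating function of $\mathfrak{S}_{n}(bs-1,x)$; the second piece gives
$(1-x)\,\mathcal{F}(t,b,s:x)$. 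So first I would record the identity
\begin{equation*}
\frac{\partial }{\partial t}\mathcal{F}(t,b,s:x)=x\cdot\frac{2^{b-1}x^{bs-1}(t/2)^{bs-1}e^{t(1-x)}}{(bs-1)!}+(1-x)\,\mathcal{F}(t,b,s:x),
\end{equation*}
keeping in mind that the first term on the right is precisely the
generating function of $x\,\mathfrak{S}_{n}(bs-1,x)$ in the sense of
(\ref{s1}).

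Next I would expand both sides as power series in $t$ using (\ref{s1}).
On the left, $\frac{\partial}{\partial t}\sum_{n\ge 0}\mathfrak{S}_n(bs,x)\frac{t^n}{n!}
=\sum_{n\ge 0}\mathfrak{S}_{n+1}(bs,x)\frac{t^n}{n!}$, i.e.\ the
coefficient of $t^{n-1}/(n-1)!$ on the left is $\mathfrak{S}_n(bs,x)$.
On the right, the first term contributes
$x\sum_{n\ge 0}\mathfrak{S}_n(bs-1,x)\frac{t^n}{n!}$ and the second
contributes $(1-x)\sum_{n\ge 0}\mathfrak{S}_n(bs,x)\frac{t^n}{n!}$.
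Shifting indices so that all three series are written as sums of
$t^{n-1}/(n-1)!$ and comparing coefficients yields
\begin{equation*}
\mathfrak{S}_{n}(bs,x)=x\,\mathfrak{S}_{n-1}(bs-1,x)+(1-x)\,\mathfrak{S}_{n-1}(bs,x),
\end{equation*}
which is (\ref{r3}).

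The only subtlety, and the one I would be most careful about, is the
index bookkeeping near the bottom of the range: the first term on the
right involves $\mathfrak{S}_{n-1}(bs-1,x)$, whose generating function
carries $(t/2)^{bs-1}$, so its series starts at $n=bs-1$, while the
left-hand series $\mathfrak{S}_n(bs,x)$ starts at $n=bs$. For $n\ge bs$
everything lines up and the comparison of coefficients is legitimate;
for $n<bs$ both sides vanish by the vanishing conventions stated after
(\ref{s1}), so the identity holds trivially there as well. An
alternative, purely combinatorial check is to substitute the closed form
from Theorem~\ref{Teorem-1} into both sides and use the Pascal-type
identity $\binom{n}{bs}=\binom{n-1}{bs-1}+\binom{n-1}{bs}$ together with
$2^{b(s-1)}=2\cdot 2^{(b-1)(s-1)}\cdot 2^{-(s-1)}$-type bookkeeping; but
the generating-function argument is cleaner and is the one I would
present.
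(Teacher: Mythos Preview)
Your proposal is correct and follows exactly the route indicated by the paper: differentiate the generating function $\mathcal{F}(t,b,s:x)$ with respect to $t$, identify the two resulting pieces as $x$ times the generating function for $\mathfrak{S}_n(bs-1,x)$ and $(1-x)$ times the generating function for $\mathfrak{S}_n(bs,x)$, then compare coefficients. The paper itself states only the one-line hint (``By using the partial derivative \ldots\ with respect to the variable $t$''), so you have in fact written out the intended proof in full; your remarks on the index bookkeeping at the bottom of the range are a welcome addition that the paper omits.
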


\textbf{Remark 3.} If setting $s=1$, then (\ref{r3}) reduces to a recursive
relation of the Bernstein polynomials%
\begin{equation*}
B_{n}(b,x)=(1-x)B_{n-1}(b,x)+xB_{n-1}(b-1,x)
\end{equation*}%
and (\ref{r2}) reduces to derivative of the Bernstein polynomials%
\begin{equation*}
\frac{d}{dx}B_{n}(j,x)=n\left( B_{n-1}(j-1,x)-B_{n-1}(j,x)\right) ,
\end{equation*}%
respectively.

By the \textit{umbral calculus }convention in (\ref{s1}), we get%
\begin{equation*}
\frac{2^{b}x^{bs}\left( \frac{t}{2}\right) ^{bs}}{\left( bs\right) !}%
=e^{\left( \mathfrak{S}(bs,x)-(1-x)\right) t},
\end{equation*}%
where $\mathfrak{S}^{n}(bs;x)$ is replaced by $\mathfrak{S}_{n}(bs;x)$.
After some elementary calculation, we arrive at the following theorem.

\begin{theorem}
If $n=bs$, then we have%
\begin{equation*}
2^{b(1-s)}x^{bs}=\sum_{j=0}^{bs}\left( 
\begin{array}{c}
bs \\ 
j%
\end{array}%
\right) (-1)^{bs-j}\left( 1-x\right) ^{bs-j}\mathfrak{S}_{j}(bs,x).
\end{equation*}%
If $n>bs$, then we have%
\begin{equation*}
\sum_{j=bs+1}^{n}\left( 
\begin{array}{c}
n \\ 
j%
\end{array}%
\right) (-1)^{n-j}\left( 1-x\right) ^{n-j}\mathfrak{S}_{j}(bs,x)=0.
\end{equation*}
\end{theorem}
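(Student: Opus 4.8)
The plan is to strip the exponential factor off the generating function (\ref{s1}). Multiplying both sides of (\ref{s1}) by $e^{-t(1-x)}$ — which is just the umbral identity already displayed above, read as an ordinary power-series identity — gives
\begin{equation*}
\frac{2^{b(1-s)}x^{bs}t^{bs}}{(bs)!}=e^{-t(1-x)}\sum_{n=0}^{\infty}\mathfrak{S}_{n}(bs,x)\frac{t^{n}}{n!},
\end{equation*}
where I have used $2^{b}(t/2)^{bs}=2^{b(1-s)}t^{bs}$. The left-hand side is now a single monomial in $t$, so all the information is contained in matching it term by term against the right-hand side.

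The next step is to expand $e^{-t(1-x)}=\sum_{k=0}^{\infty}\frac{(-1)^{k}(1-x)^{k}}{k!}\,t^{k}$ and form the Cauchy product with $\sum_{j\ge 0}\mathfrak{S}_{j}(bs,x)\,t^{j}/j!$. Collecting powers of $t$, the coefficient of $t^{n}/n!$ on the right-hand side works out to
\begin{equation*}
\sum_{j=0}^{n}\binom{n}{j}(-1)^{n-j}(1-x)^{n-j}\mathfrak{S}_{j}(bs,x),
\end{equation*}
while on the left-hand side the coefficient of $t^{n}/n!$ equals $2^{b(1-s)}x^{bs}$ when $n=bs$ and $0$ otherwise.

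Equating the two expressions then delivers the theorem: the case $n=bs$ gives the first identity and the case $n>bs$ gives the second. In both cases one invokes the normalization $\mathfrak{S}_{0}(bs,x)=\cdots=\mathfrak{S}_{bs-1}(bs,x)=0$ recorded after (\ref{s1}) to discard the vanishing terms with $j\le bs-1$; if one additionally substitutes the closed form from Theorem~\ref{Teorem-1} for the surviving terms, the $n=bs$ case reduces to the direct verification that $\mathfrak{S}_{bs}(bs,x)=2^{b(1-s)}x^{bs}$, and the $n>bs$ case becomes the binomial identity $\sum_{i}\binom{n-bs}{i}(-1)^{i}=0$ after applying $\binom{n}{j}\binom{j}{bs}=\binom{n}{bs}\binom{n-bs}{j-bs}$.

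I do not anticipate a genuine obstacle: this is a routine manipulation of the generating function. The only points that need care are the legitimacy of the Cauchy product — harmless, since for fixed $x$ both factors are entire in $t$, so the computation is equally valid as an identity of formal power series — and the bookkeeping of the summation index once the terms with $j\le bs-1$ are dropped, which is the one place where an off-by-one slip could occur.
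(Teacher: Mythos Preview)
Your approach is exactly the paper's: the ``umbral calculus convention'' displayed just before the theorem is nothing other than your operation of multiplying both sides of (\ref{s1}) by $e^{-t(1-x)}$ and reading the result as an exponential generating function identity, and ``after some elementary calculation'' means precisely the Cauchy product and coefficient comparison you carry out. So method and paper agree.

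There is, however, a genuine bookkeeping issue in the second identity, and it is the very off-by-one you warned yourself about. Your argument (and the paper's) yields, for $n>bs$,
\[
\sum_{j=0}^{n}\binom{n}{j}(-1)^{n-j}(1-x)^{n-j}\mathfrak{S}_{j}(bs,x)=0,
\]
and dropping the terms with $j\le bs-1$ leaves the sum starting at $j=bs$, \emph{not} $j=bs+1$ as printed. The $j=bs$ term is $\binom{n}{bs}(-1)^{n-bs}(1-x)^{n-bs}\mathfrak{S}_{bs}(bs,x)$ with $\mathfrak{S}_{bs}(bs,x)=2^{b(1-s)}x^{bs}\neq 0$ in general, so it cannot be discarded. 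Indeed, your own check via the closed form makes this visible: the reduction $\sum_{i}\binom{n-bs}{i}(-1)^{i}=0$ uses the full range $i=0,\dots,n-bs$, i.e.\ $j=bs,\dots,n$; omitting $i=0$ would give $-1$, not $0$. (Concretely, for $b=s=1$, $n=2$ one has $\sum_{j=2}^{2}\binom{2}{j}(-1)^{2-j}(1-x)^{2-j}\mathfrak{S}_{j}(1,x)=2x(1-x)\neq 0$.) So your proof is correct, but what it proves is the identity with lower limit $bs$; the statement as printed in the paper appears to contain a typographical slip in that lower index.
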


Relations between the polynomials the polynomial $\mathfrak{S}_{n}(bs,x)$,
Bernoulli polynomial of higher order and\ Stirling numbers of second kind is
given by the following theorem:

\begin{theorem}
Let $b$, $n$ and $s$ be nonnegative integers with $n\geq bs$. Then we have%
\begin{equation*}
\mathfrak{S}_{n}(bs,x)=2^{b(1-s)}x^{bs}\sum_{j=0}^{n}\left( 
\begin{array}{c}
n \\ 
j%
\end{array}%
\right) S(j,bs)B_{n-j}^{(bs)}(1-x),
\end{equation*}%
where $B_{n}^{(v)}(x)$ and $S(n,j)$ denote Bernoulli polynomial of higher
order and\ Stirling numbers of second kind, which are given by means of the
following generating function, respectively%
\begin{equation*}
\frac{t^{v}e^{xt}}{\left( e^{t}-1\right) ^{v}}=\sum_{n=0}^{\infty
}B_{n}^{(v)}(x)\frac{t^{n}}{n!},\text{ }(\left\vert t\right\vert <2\pi )
\end{equation*}%
and%
\begin{equation*}
(-1)^{v}\frac{\left( 1-e^{t}\right) ^{v}}{v!}=\sum_{n=0}^{\infty }S(n,v)%
\frac{t^{n}}{n!}.
\end{equation*}
\end{theorem}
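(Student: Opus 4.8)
The plan is to recognize the generating function $\mathcal{F}(t,b,s:x)$ as a product of the generating functions of the Stirling numbers $S(j,bs)$ and of the higher-order Bernoulli polynomials $B_{n}^{(bs)}(1-x)$, and then to read off the claimed identity by comparing Taylor coefficients in (\ref{s1}).

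First I would rewrite $\mathcal{F}$ by inserting the factor $(e^{t}-1)^{bs}/(e^{t}-1)^{bs}$ and using $2^{b}/2^{bs}=2^{b(1-s)}$:
\begin{equation*}
\mathcal{F}(t,b,s:x)=2^{b(1-s)}x^{bs}\,\frac{(e^{t}-1)^{bs}}{(bs)!}\cdot\frac{t^{bs}e^{(1-x)t}}{(e^{t}-1)^{bs}}.
\end{equation*}
By the stated generating function of the Stirling numbers, together with the identity $(-1)^{bs}(1-e^{t})^{bs}=(e^{t}-1)^{bs}$, the first factor equals $\sum_{j=0}^{\infty}S(j,bs)\,t^{j}/j!$; by the stated generating function of $B_{n}^{(v)}$ with $v=bs$ and argument $1-x$, the second factor equals $\sum_{m=0}^{\infty}B_{m}^{(bs)}(1-x)\,t^{m}/m!$. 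Both series are valid for $|t|<2\pi$, and the entire factor $(e^{t}-1)^{bs}$ introduced and cancelled causes no difficulty.

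Next I would form the Cauchy product of these two exponential generating functions, which yields the binomial convolution
\begin{equation*}
\mathcal{F}(t,b,s:x)=2^{b(1-s)}x^{bs}\sum_{n=0}^{\infty}\left(\sum_{j=0}^{n}\binom{n}{j}S(j,bs)B_{n-j}^{(bs)}(1-x)\right)\frac{t^{n}}{n!}.
\end{equation*}
Comparing this expansion with the defining series (\ref{s1}) and equating the coefficients of $t^{n}/n!$ gives the asserted formula $\mathfrak{S}_{n}(bs,x)=2^{b(1-s)}x^{bs}\sum_{j=0}^{n}\binom{n}{j}S(j,bs)B_{n-j}^{(bs)}(1-x)$.

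I do not expect a genuine obstacle here: the only point needing a word of care is the legitimacy of multiplying and dividing by $(e^{t}-1)^{bs}$, which is justified since both resulting power series converge in the common disc $|t|<2\pi$, so the manipulation is an identity of analytic (equivalently, formal) power series there. As a consistency check worth recording, since $S(j,bs)=0$ for $j<bs$ the inner sum vanishes for $n<bs$, in agreement with $\mathfrak{S}_{0}(bs,x)=\cdots=\mathfrak{S}_{bs-1}(bs,x)=0$, which explains the hypothesis $n\geq bs$.
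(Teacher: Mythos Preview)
Your proposal is correct and follows essentially the same route as the paper: factor $\mathcal{F}(t,b,s:x)$ as $2^{b(1-s)}x^{bs}$ times $\dfrac{(e^{t}-1)^{bs}}{(bs)!}$ times $\dfrac{t^{bs}e^{(1-x)t}}{(e^{t}-1)^{bs}}$, recognize the two factors as the generating functions of $S(j,bs)$ and $B_{m}^{(bs)}(1-x)$, take the Cauchy product, and compare coefficients with (\ref{s1}). Your added remarks on convergence in $|t|<2\pi$ and the consistency check via $S(j,bs)=0$ for $j<bs$ go slightly beyond what the paper records but are welcome clarifications.
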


\begin{proof}
By (\ref{s1}), we have%
\begin{equation*}
2^{b(1-s)}x^{bs}\left( \frac{(-1)^{bs}(e^{t}-1)^{bs}}{\left( bs\right) !}%
\right) \left( \frac{t^{bs}e^{(1-x)t}}{\left( e^{t}-1\right) ^{bs}}\right)
=\sum_{n=0}^{\infty }\mathfrak{S}_{n}(bs,x)\frac{t^{n}}{n!}.
\end{equation*}%
From the above, we have%
\begin{equation*}
\sum_{n=0}^{\infty }\mathfrak{S}_{n}(bs,x)\frac{t^{n}}{n!}%
=2^{b(1-s)}x^{bs}\left( \sum_{n=0}^{\infty }B_{n}^{(bs)}(1-x)\frac{t^{n}}{n!}%
\right) \left( \sum_{n=0}^{\infty }S(n,k)\frac{t^{n}}{n!}\right) .
\end{equation*}%
By Cauchy product in the above, after some calculation, we find the desired
result.
\end{proof}

By using same method of Lopez and Temme' \cite{LopezTemme}, we give contour
integral representation of $\mathfrak{S}_{n}(bs,x)$ as follows:%
\begin{equation*}
\mathfrak{S}_{n}(bs,x)=\frac{\Gamma (m+1)}{\Gamma (k+1)}\frac{1}{2\pi i}%
\int_{\mathcal{C}}\mathcal{F}(t,b,s:x)\frac{dz}{z^{m+1}},
\end{equation*}%
where $\mathcal{C}$ is a circle around the origin and the integration is in
positive direction.

\section{Interpolation Function of the polynomials $\mathfrak{S}_{n}(bs,x)$}

In this section, we construct meromorphic function. This function
interpolates $\mathfrak{S}_{n}(bs;x)$ at negative integers. These values are
given explicitly in Theorem \ref{TheoremNew1}.

For $z\in \mathbb{C}$, by applying the Mellin transformation to (\ref{s1}),
we obtain%
\begin{equation*}
\mathfrak{B}(z,bs;x)=\frac{1}{\Gamma (z)}\int_{0}^{\infty }t^{z-1}\mathcal{F}%
(-t,b,s:x)dt,
\end{equation*}%
where $\Gamma (z)$\ is Euler gamma function. From the above, we define the
following interpolation function.

\begin{definition}
\label{DefiniNew1}Let $z\in \mathbb{C}$ with $\Re (z)>0$ and $x\neq 1$. Let $%
b$ and $s$ be nonnegative integers. Then we define%
\begin{equation}
\mathfrak{B}(z,bs;x)=(-1)^{bs}\frac{\Gamma (z+bs)}{\Gamma (bs+1)\Gamma (z)}%
\frac{2^{b(1-s)}x^{bs}}{\left( 1-x\right) ^{z+bs}},  \label{12ef}
\end{equation}
\end{definition}

\textbf{Remark 4. }By the well-known identity $\Gamma (bs+1)=bs\Gamma (bs)$,
for $\Re (z)>0$ we have%
\begin{equation*}
\mathfrak{B}(z,k;x)=\frac{(-1)^{bs}2^{b(1-s)}x^{bs}}{bsB(z,k)\left(
1-x\right) ^{z+bs}},
\end{equation*}%
where $B(z,k)$ denotes the beta function. Observe that if $x=1$, then%
\begin{equation*}
\mathfrak{B}(z,bs,1)=\infty .
\end{equation*}

\begin{theorem}
\label{TheoremNew1}Let $b$, $n$ and $s$ be nonnegative integers with $n\geq
bs$ and $x\in \lbrack 0,1]$. Then we have%
\begin{equation*}
\mathfrak{B}(-n,bs;x)=\mathfrak{S}_{n}(bs,x).
\end{equation*}
\end{theorem}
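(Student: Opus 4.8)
The plan is to evaluate the interpolation function $\mathfrak{B}(z,bs;x)$ given by the closed form in \eqref{12ef} at the negative integer $z=-n$ and match it against the explicit formula for $\mathfrak{S}_{n}(bs,x)$ supplied by Theorem \ref{Teorem-1}. So first I would substitute $z=-n$ into
\begin{equation*}
\mathfrak{B}(z,bs;x)=(-1)^{bs}\frac{\Gamma(z+bs)}{\Gamma(bs+1)\Gamma(z)}\frac{2^{b(1-s)}x^{bs}}{(1-x)^{z+bs}},
\end{equation*}
and the whole point is to make sense of the ratio $\Gamma(z+bs)/\Gamma(z)$ as $z\to -n$. Since $n\geq bs$, both $\Gamma(z+bs)$ and $\Gamma(z)$ have poles at $z=-n$, so the quotient is the ratio of two residues (or, more cleanly, one uses the reflection/functional-equation identity to turn the quotient of Gammas into a finite product). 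Concretely, $\Gamma(z+bs)/\Gamma(z)=(z)(z+1)\cdots(z+bs-1)$ as a polynomial identity valid for all $z$ by analytic continuation, so at $z=-n$ this is $(-n)(-n+1)\cdots(-n+bs-1)=(-1)^{bs}\,n(n-1)\cdots(n-bs+1)=(-1)^{bs}\frac{n!}{(n-bs)!}$.

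Next I would assemble the pieces: with $\Gamma(bs+1)=(bs)!$ and $(1-x)^{z+bs}=(1-x)^{-n+bs}=(1-x)^{-(n-bs)}$, we get
\begin{equation*}
\mathfrak{B}(-n,bs;x)=(-1)^{bs}\cdot(-1)^{bs}\frac{n!}{(n-bs)!}\cdot\frac{1}{(bs)!}\cdot 2^{b(1-s)}x^{bs}(1-x)^{n-bs}=\binom{n}{bs}\frac{x^{bs}(1-x)^{n-bs}}{2^{b(s-1)}},
\end{equation*}
since $(-1)^{bs}(-1)^{bs}=1$, $\frac{n!}{(n-bs)!(bs)!}=\binom{n}{bs}$, and $2^{b(1-s)}=2^{-b(s-1)}$. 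This is exactly the formula for $\mathfrak{S}_{n}(bs,x)$ from Theorem \ref{Teorem-1}, which completes the argument.

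The main obstacle — really the only delicate point — is the justification that the definition \eqref{12ef}, which is stated only for $\Re(z)>0$, may legitimately be evaluated at $z=-n$. The cleanest route is to observe that the right-hand side of \eqref{12ef} extends to a meromorphic function of $z$ (the only $z$-dependence sits in $\Gamma(z+bs)/\Gamma(z)$, which is entire being the polynomial $\prod_{i=0}^{bs-1}(z+i)$, and in $(1-x)^{-(z+bs)}$, which is entire in $z$ for fixed $x\in[0,1)$); hence the formula on the right of \eqref{12ef} is the natural meromorphic continuation, and Theorem \ref{TheoremNew1} is the statement that this continuation takes the value $\mathfrak{S}_{n}(bs,x)$ at $z=-n$. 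One should also note the hypothesis $n\geq bs$ guarantees $n-bs\geq 0$ so that $(1-x)^{n-bs}$ is a genuine polynomial factor and no singularity at $x=1$ intervenes; for $x=1$ and $bs>0$ both sides vanish (consistent with $\mathfrak{S}_n(bs,1)=0$), while the edge case $x=1$ paired with $bs=0$ is excluded or trivial. Beyond this continuation remark the computation is purely the bookkeeping of Gamma-function identities sketched above.
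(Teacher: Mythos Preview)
Your proposal is correct and follows essentially the same approach as the paper: both evaluate the closed form \eqref{12ef} at $z=-n$ by computing the ratio $\Gamma(z+bs)/\Gamma(z)$ there, the paper phrasing this via the residues $\mathrm{Res}(\Gamma,-n)=(-1)^{n}/n!$ while you use the equivalent Pochhammer identity $\Gamma(z+bs)/\Gamma(z)=\prod_{i=0}^{bs-1}(z+i)$. Your write-up is in fact more explicit than the paper's, and your remarks on analytic continuation and the role of the hypothesis $n\geq bs$ fill in details the paper leaves to the reader.
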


\begin{proof}
Let $n$ and $b$, and $s$\ be positive integers with $bs\leq n$. $\Gamma (z)$
has simple poles at $z=-n=0,-1,-2,-3,\cdots $. The residue of $\Gamma (z)$ is%
\begin{equation*}
Res(\Gamma (z),-n)=\frac{(-1)^{n}}{n!}.
\end{equation*}%
Taking $z\rightarrow -n$ into (\ref{12ef}) and using the above relations,
the desired result can be obtained.
\end{proof}

Observe that if \ we set $s=1$ in Theorem \ref{TheoremNew1}, we arrive at%
\begin{equation*}
\mathfrak{B}(-n,b;x)=B_{n}(b,x).
\end{equation*}

\section{Further Remarks on B%
\'{}%
ezier curves}

The Bernstein polynomials are used to construct B%
\'{}%
ezier curves. B%
\'{}%
ezier was an engineer with the Renault car company and set out in the early
1960's to develop a curve formulation which would lend itself to shape
design. Engineers may find it most understandable to think of B%
\'{}%
ezier curves in terms of the center of mass of a set of point masses cf. 
\cite{Setberg}, for example, consider the four masses $m_{0}$, $m_{1}$, $%
m_{2}$, and $m_{3}$ located at points $P_{0}$, $P_{1}$, $P_{2}$, $P_{3}$.
The center of mass of these four point masses is given by the equation%
\begin{equation*}
P=\frac{m_{0}P_{0}+m_{1}P_{1}+m_{2}P_{2}+m_{3}P_{3}}{m_{0}+m_{1}+m_{2}+m_{3}}%
.
\end{equation*}%
Next, imagine that instead of being fixed, constant values, each mass varies
as a function of some parameter $x$. In specific case, let $m_{0}=(1-x)^{3}$%
, $m_{1}=3t(1-x)^{2}$, $m_{2}=3t^{2}(1-x)$ and $m_{3}=x^{3}$. The values of
these masses are a function of $x$. For each value of $x$, the masses assume
different weights and their center of mass changes continuously. As $x$
varies between $0$ and $1$, a curve is swept out by the center of masses.
This curve is a cubic B%
\'{}%
ezier curve. For any value of $x$, this B%
\'{}%
ezier curve is%
\begin{equation*}
P=m_{0}P_{0}+m_{1}P_{1}+m_{2}P_{2}+m_{3}P_{3},
\end{equation*}%
where $m_{0}+m_{1}+m_{2}+m_{3}\equiv 1$. These variable masses $m_{i}$ are
normally called \textit{blending functions} and their locations $P_{i}$ are
known as\textit{\ control points} or B%
\'{}%
ezier points. The blending functions, in the case of B%
\'{}%
ezier curves, are known as \textit{Bernstein polynomials. This} curve is
used in computer graphics and related fields and also in the time domain,
particularly in animation and interface design cf. (\cite{Goldman}, \cite%
{goldman2}, \cite{Setberg}).

The B%
\'{}%
ezier curve of degree $n$ can be generalized as follows. Given points $P_{0}$%
, $P_{1}$, $P_{2}$,$\cdots $, $P_{n}$ the B%
\'{}%
ezier curve is%
\begin{equation}
B(x)=\sum_{k=0}^{n}P_{k}B_{n}(k,x),  \label{r7}
\end{equation}%
where $x\in \lbrack 0,1]$ and $B_{n}(k,t)$ denotes Bernstein polynomials.

We now unify the B%
\'{}%
ezier curve in (\ref{r7}) by\ the polynomials $\mathfrak{g}_{n}(bs,x)$ as
follows%
\begin{equation*}
\mathbb{B}_{n}(x,y)=\sum_{k=0}^{n}P_{k}\mathfrak{g}_{n}(k;x),
\end{equation*}%
\ with control points $P_{k}$\textbf{.}

\begin{acknowledgement}
The present investigation was supported by the \textit{Scientific Research
Project Administration of Akdeniz University}.
\end{acknowledgement}

\medskip

\end{document}